\newcommand*{\dt}[1]{{\dot{#1}}}
\newcommand{\Oint}{{\Omega_{\mathrm{int}}}}
\newcommand{\Oout}{{\Omega_{\mathrm{out}}}}
\newcommand{\Om}{{\Omega_{\mathrm{m}}}}
\newcommand{\velocity}{{b}}
\newtheorem{mytheorem}{Theorem}[section]
\newtheorem{mydefinition}[mytheorem]{Definition}
\newtheorem{myremark}[mytheorem]{Remark}
\numberwithin{equation}{section}
\begin{document}
\title{A shape optimization algorithm for interface identification allowing topological changes}
\author{Martin Siebenborn\thanks{Universit\"at Trier, D-54286 Trier, Germany, Email: siebenborn@uni-trier.de}
}
\date{}
\maketitle

\begin{abstract}
\noindent
In this work we investigate a combination of classical PDE constrained optimization methods and a rounding strategy based on shape optimization for the identification of interfaces.
The goal is to identify radioactive regions in a groundwater flow represented by a control that is either active or inactive.
We use a relaxation of the binary problem on a coarse grid as initial guess for the shape optimization with higher resolution.
The result is a computationally cheap method that does not have to perform large shape deformations.
We demonstrate that our algorithm is moreover able to change the topology of the initial guess.
\end{abstract}

\section{Introduction}
\label{sec:introduction}
Parameter estimation and optimal control for problems modeled by partial differential equations usually aims for optimizers which are continuous variables.
Yet, there are many applications, where it is known, that a parameter only takes a discrete number of values.
This could be for instance a spatially distributed binary variable representing a compound of two different materials.
Common problems in this field are ranging from electrical impedance tomography to tolopogy optimization of elastic materials.
See for instance \cite{cheney1999electrical,hintermuller2008electrical,allaire2004structural}.
In this work we focus on groundwater flows through regions that are potentially contaminated with radioactive substances.
The aim is to recover the locations of the radioactive sources based on measurements of the contaminated groundwater.
In applications of these kinds the variable is not the value of the parameter itself but the shape of the interfaces separating the different, discrete states.
This can significantly reduce the degrees of freedom of the optimization problem and thereby enable high spatial resolutions.
PDE-constraint shape optimization has turned out to be very effective for a wide range of applications \cite{berggren2016largescale,mohammadi2001applied,jameson2003aerodynamic,giles2000introduction,allaire2012shape,gangl2015shape}.
Yet, these methods require a good initial guess of the number of shapes and their approximate location, which is usually a strong assumption.
Basically, there are two possible ways to resolve the shapes of the desired interfaces.
It can either be resolved by edges in a finite element mesh or implicitly described by level-set functions.
In both cases an optimization algorithm has to move the mesh or the level-set function along descent directions.
This leads to numerical instabilities for larger deformations, which is the case for an inaccurate initial guess.
One has to deal with a decreasing mesh quality after each optimization step.
Especially larger translations of shapes are problematic.
This is due to the fact that a shape derivative mainly contains information for deformations which are normal to the boundary.
Deformations in tangent direction do not affect the objective function.
Approaches to overcome this issue are presented for instance in \cite{schulz2016computational,schulz2016efficient}.

A common problem with shape representations by level-sets are increasing slopes during the transport of the function.
This requires a so called reinitialization of the level-set after several optimization steps \cite{allaire2004structural}.
Therefore, a level-set has to be determined which represents a given shape.
This is computationally expensive and does not suit for black-box solvers.
For instance in \cite{laurain2016distributed} the combination of level-set shape optimization and volume formulation of shape derivatives is investigated for tomography problems.

Another possible approach, without resolving shapes, is to relax the optimal control problem to continuous parameter values and apply a rounding strategy to the minimizer (see for instance \cite{hante2013relaxation}).

In this work we aim at a combination of both ideas, which means to start with a relaxation of the problem and then use shape optimization based on level-sets as a rounding strategy.
First we apply a semismooth Newton optimization algorithm to come up with a continuous parameter distribution in the range between zero and one.
Then we use this function as a level-set, such that the mean value of the parameter describes the shape of the interface.
This is a simple rounding strategy which usually leads to poor results.
We then proceed with a shape optimization based on this level-set function.
The benefit here is, that only small deformations have to be made since the rounded minimizer of the relaxation is a good initial guess in terms of location and number of possible shapes.

From a computational point of view it is attractive that the initial guess for the interface of interest does not have to be of very high resolution.
We thus present numerical results, where the relaxation problem is solved on a coarse grid only.
This solution is then interpolated to finer grids and a high resolution shape optimization algorithm is applied.

This work has the following structure:
In Section \ref{sec:model} we introduce the model equations and the associated optimization problems.
Section \ref{sec:algorithm} presents two approaches, a relaxation solver together with a rounding strategy and a shape optimization method, which are combined into one algorithm.
Finally, in Section \ref{sec:numerics} we demonstrate our algorithm in two and three dimensional, numerical test cases.

\section{Model equations and problem formulation}
\label{sec:model}
In this work we focus on a fluid flow in a porous medium driven by advection and diffusion, which transports a tracer.
In a bounded Lipschitz domain $\Omega \subset \mathbbm{R}^d$, with dimension $d \in \lbrace2,3\rbrace$, $u: \Omega \rightarrow \mathbbm{R}$ measures the concentration of the tracer.
It is transported along the velocity field $\velocity \in W^{1, \infty}\left(\Omega, \mathbbm{R}^d\right)$ with diffusivity constant $c \in L^{\infty}(\Omega)$.
With this model we simulate a groundwater flow in a domain with radioactive sources given by $f \in L^2(\Omega)$.
Therefore, $\Omega$ is subdivided into two areas, $\Oout$ and $\Oint$, a clean and a contaminated one.
The source $f$ depends on the shape of $\Oint$.
We further assume that concentration measurements of the tracer $u$ are given in $\Om \subseteq \Omega$.
Figure \ref{fig:domain} gives a sketch of this situation.
\begin{figure}
\begin{center}
\def\svgwidth{0.7\textwidth}
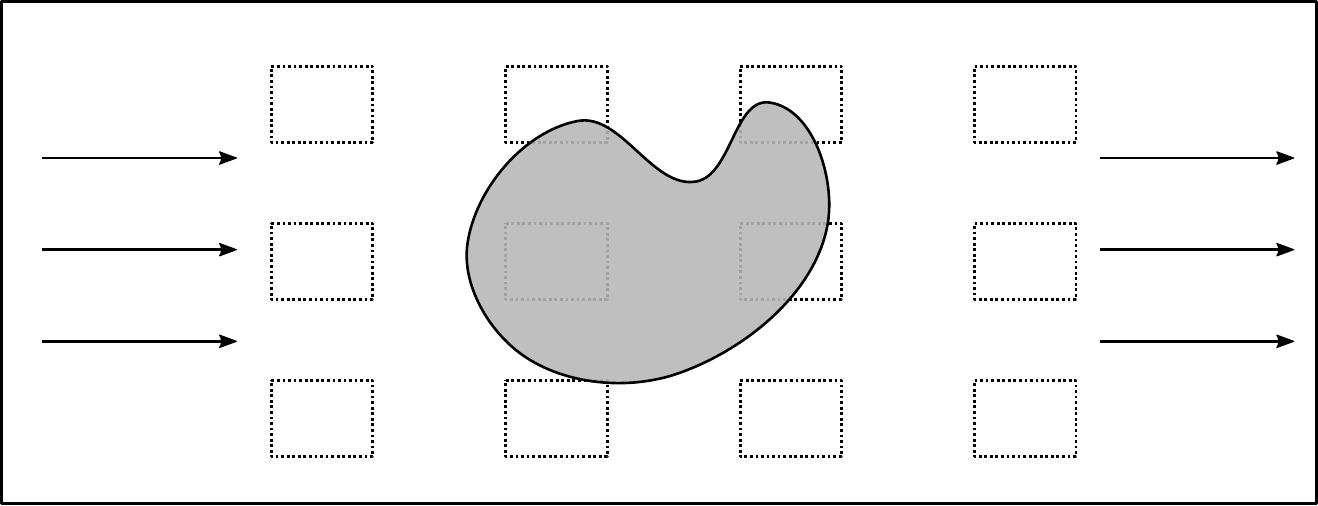
\end{center}
\caption{The flow domain $\Omega = \overline{\Oint} \cup \Oout$ with outer boundary $\partial \Omega$, velocity field $\velocity$ and the location of data measurements $\Om$}
\label{fig:domain}
\end{figure}
The corresponding model is described by the stationary advection-diffusion equations
\begin{equation}
\begin{aligned}
-c \Delta u + \mathrm{div} (\velocity u) &= f \quad\;\; \mathrm{in} \; \Omega\\[0.2cm]
u \velocity^T n - c \frac{\partial u}{\partial n} &= g u \quad \mathrm{on} \; \partial \Omega.
\end{aligned}
\label{eq:strong_form_advection_diffusion}
\end{equation}
The tracer is assumed to enter and leave the domain under the influence of both advection and diffusion, which is expressed as boundary conditions $g = \max\left\lbrace 0,\, \velocity^T n\right\rbrace$, where $n$ denotes the outward-pointing normal vector on $\partial \Omega$.
We obtain the weak formulation by multiplying with test functions $w$ and integrating over $\Omega$ which yields
\begin{equation*}
\int_{\Omega} -c \Delta u w +  w\, \mathrm{div}(\velocity u) \; dx = \int_\Omega fw\; dx.
\end{equation*}
By applying Green's identity we obtain
\begin{equation*}
\int_{\Omega} c (\nabla u)^T \nabla w - u \velocity^T \nabla w \; dx + \int_{\partial \Omega} w(u \velocity^T n  - c \frac{\partial u}{\partial n})\;ds  =  \int_\Omega fw\; dx
\end{equation*}
and finally integration by parts yields the problem in weak form:\\
Find $u\in H^1(\Omega)$ such that
\begin{equation}
\underbrace{\int_{\Omega} c (\nabla u)^T \nabla w -u \velocity^T \nabla w \; dx + \int_{\partial \Omega} wgu\;ds}_{\eqqcolon a_1(u,w)}  = \underbrace{\int_\Omega fw\; dx}_{\eqqcolon l_1(w)}
\label{eq:state}
\end{equation}
for all $w\in H^1(\Omega)$.

The original optimization problem for given data measurements $\bar{u}$ is of the form
\begin{equation}
\begin{aligned}
&\min\limits_{f}\; \frac{ 1}{ 2}\int_\Om (u-\bar{u})^2\; dx\\[0.3cm]
\text{s.t. }& f(x) \in \lbrace 0, 1\rbrace \; \text{ a.e. in $\Omega$ and \eqref{eq:state}}
\end{aligned}
\label{eq:original_problem}
\end{equation}
The binary variable $f$ reflects the situation, that the material in the flow domain can either be contaminated or not.
Values in $(0,1)$ are not allowed.
From a computational point of view this problem is challenging, especially when high spatial resolutions are preferred.
Moreover, a discrete optimization approach produces solutions which typically depend on the chosen discretization.

In the following we reformulate problem \eqref{eq:original_problem} into two stages.
First a relaxed version where intermediate values are allowed as
\begin{equation}
\begin{aligned}
&\min\limits_{f \in L^2(\Omega)}\; \frac{ 1}{ 2}\int_\Om (u-\bar{u})^2\; dx  +  \frac{\mu}{2} \int_\Omega f^2 \; dx\\[0.3cm]
\text{s.t. }&0 \leq f \leq 1 \;\; \text{a.e. in $\Omega$ and \eqref{eq:state}}.
\end{aligned}
\label{eq:relaxed_problem}
\end{equation}
Here the second term is a regularization for $\mu > 0$.
A solution of problem \eqref{eq:relaxed_problem} leaves open the question for a rounding strategy of $f$ towards 0 or 1 in order to approximated the original problem \eqref{eq:original_problem}.

\begin{sloppypar}
Given a solution $f$ of \eqref{eq:relaxed_problem} we define the shape of interest by the rounding strategy ${\Oint = \lbrace x \in \Omega \colon f(x) > 0.5 \rbrace}$.
In order to improve the result, thereafter we turn to the following optimization problem:
\end{sloppypar}
\begin{equation}
\begin{aligned}
&\min\limits_{\Omega}\; J(u,\Omega) \coloneqq \frac{ 1}{ 2}\int_\Om (u-\bar{u})^2\; dx\\[0.3cm]
\text{s.t. }&\text{\eqref{eq:state} with } f(x) = \begin{cases} 1,\; x\in \Oint\\ 0,\; x \in \Omega\setminus\Oint \end{cases}
\end{aligned}
\label{eq:shape_problem}
\end{equation}

Note that problem \eqref{eq:shape_problem} is restricted to the original state equation \eqref{eq:state} with a modified right hand side, which now depends on the shape of $\Oint$.
In this problem formulation it is minimized over $\Omega$ and not $\Oint$, which determines the shape of the source $f$.
Later, descent directions will be continuous deformations defined on $\Omega$, which are zero at $\partial \Omega$ such that the outer shape is fixed.
Thus, $\Oint$ is simultaneously changed together with $\Omega$.

If we assume a reasonable solution of the relaxed problem \eqref{eq:relaxed_problem} and an efficient rounding strategy, we have a good initial guess for \eqref{eq:shape_problem} and only small corrections to the shape of the distribution of $f$ have to be made.

In the following we formulate the Lagrangian of problem \eqref{eq:shape_problem} where we indicate the dependence of $f$ on $\Omega$ by $f(\Omega)$ and obtain
\begin{multline}
G \left(u,w,f(\Omega)\right) = \frac{ 1}{ 2}\int_\Om (u-\bar{u})^2\; dx + \int_{\Omega} c (\nabla u)^T \nabla w -u \velocity^T \nabla w \; dx\\
+ \int_{\partial \Omega} wgu\;ds - \int_\Omega fw\; dx.
\label{eq:lagrangian}
\end{multline}
Differentiating \eqref{eq:lagrangian} with respect to the state variable $u$ in the sense
\begin{equation*}
\frac{d}{d t} G \left(u + tz,w,f(\Omega)\right)\Bigr|_{t=0} = 0 \quad \forall\; z \in H^1(\Omega)
\end{equation*}
then yields the weak formulation of the adjoint equation:
Find $w\in H^1(\Omega)$ such that
\begin{equation}
\int_{\Omega} c (\nabla w)^T \nabla z - z \velocity^T \nabla w \; dx + \int_{\partial \Omega} zgw\;ds = -\int_\Om (u-\bar{u})z\; dx
\label{eq:adjoint}
\end{equation}
for all $z\in H^1(\Omega)$.

Up to now we have considered the general case of arbitrary velocity fields, which is important for the presentations in Section \ref{sec:algorithm}.
For the physical model we make the assumption of incompressiblity of the velocity $\velocity$.
\begin{myremark}
\label{rmrk:adjoint}
If we choose a velocity field that fulfills $\mathrm{div}(\velocity) = 0$, the state equation simplifies to
\begin{equation*}
-c \Delta u + \velocity^T \nabla u = f
\end{equation*}
due to the identity $\mathrm{div}(\velocity u) = u\, \mathrm{div}(\velocity) + \velocity^T \nabla u$.
In that case we can apply Green's identity to the adjoint equation \eqref{eq:adjoint} and obtain
\begin{multline*}
\int_{\Omega} - c z \Delta w - z \velocity^T \nabla w \; dx + \int_{\partial \Omega} z(gw + c \frac{\partial w}{\partial n} + w \velocity^T n - w \velocity^T n)\;ds
= -\int_\Om (u-\bar{u})z\; dx.
\end{multline*}
From that we can derive the adjoint PDE in strong from
\begin{align*}
-c \Delta w + (- \velocity^T) \nabla w &= - \chi_\Om (u -\bar{u}) \quad\;\;\; \mathrm{in} \; \Omega\\[0.2cm]
w (-\velocity^T) n - c \frac{\partial w}{\partial n} &= \underbrace{(g - \velocity^T n)}_{= \max\left\lbrace 0, -\velocity^Tn\right\rbrace} w \quad \mathrm{on} \; \partial \Omega.
\end{align*}
Note that in this case the same assembly routine can be used with a different direction of the velocity field.
\end{myremark}
In the case $\mathrm{div}(\velocity) = 0$ and $g = \max\left\lbrace 0,\, b^T n\right\rbrace$ one can proof existence and uniqueness of weak solutions.
This yields the invertability of the control-to-state map, which is an important ingredient for the optimization algorithms in the next section.
\begin{myremark}
\label{rmrk:coercivity}
In order to apply Lax Milgram theorem, which yields existence and uniqueness of solutions of \eqref{eq:state}, we first observe the equivalence
\begin{equation*}
\begin{aligned}
&- \int_{\Omega} u b^T \nabla u\; dx = \int_\Omega \underbrace{u\, \mathrm{div}(bu)}_{ = u^2 \mathrm{div}(b) + ub^T \nabla u}\; dx\; - \int_{\partial \Omega} u^2 b^T n \; ds \\
\Leftrightarrow\quad & - \int_{\Omega} u b^T \nabla u\; dx = - \frac{1}{2} \int_{\partial \Omega} u^2 b^T n \; ds.
\end{aligned}
\end{equation*}
By assuming $\Gamma_1 \coloneqq \lbrace x\in \partial \Omega : \left\vert b^T n \right\vert \geq \eta \rbrace$ for $\eta > 0$ and $\left\vert \Gamma_1 \right\vert > 0$ the generalized Friedrichs inequality (cf.~\cite[Chapter 2]{troeltzsch2010optimal}) yields a constant $r>0$, which is independent of $u$, such that
\begin{equation*}
\begin{aligned}
a_1(u,u) &=  \int_{\Omega} c (\nabla u)^T \nabla u \; dx - \frac{1}{2} \int_{\partial \Omega} u^2 b^T n \; ds \; + \; \int_{\partial \Omega} g u^2 \; ds\\
&= \int_{\Omega} c (\nabla u)^T \nabla u \; dx + \frac{1}{2} \int_{\partial \Omega} \left\vert \velocity^T n \right\vert u^2 \; ds\\
&\geq \min\left\lbrace c,\, \frac{\eta}{2}\right\rbrace \left(\int_{\Omega} (\nabla u)^T \nabla u \; dx + \int_{\Gamma_1} u^2 \; ds\right) \geq r\Vert u \Vert_{H^1(\Omega)}\\
\end{aligned}
\end{equation*}
which yields the coercivity of $a_1$.
\end{myremark}
Note that under the assumptions in Remark \ref{rmrk:adjoint} and \ref{rmrk:coercivity} also the adjoint equation admits a unique solution.

\section{Optimization algorithm}
\label{sec:algorithm}

The idea we follow in this section is to combine solutions of the problems \eqref{eq:relaxed_problem} and \eqref{eq:shape_problem} in order to obtain an approximate solution for \eqref{eq:original_problem}.
Note that a discussion on a regularization strategy for the binary problem \eqref{eq:original_problem} is thereby shifted to the relaxation and shape optimization problem.
For simplicity we discuss the relaxation problem in a finite dimensional setting.
This means that the optimization operates on discretized PDEs.
After that we introduce the shape optimization problem and show the link between both.

\subsection{A continuous optimization approach}
\label{sec:continuous_optimization}

In our algorithm we compute an initial guess of the parameter distribution by solving the relaxed problem \eqref{eq:relaxed_problem}.
This is in the class of PDE-constraint optimal control problems with bounds on the control.
In this subsection we concentrate on a finite dimensional setting.
This means that we apply the semi-smooth Newton optimization algorithm on the discretization of \eqref{eq:relaxed_problem}.
For a discussion on optimization in function spaces see for instance \cite{hintermuller2002primal,weiser2008control,ulbrich2002semismooth}.
A survey on methods for similar problems can be found in \cite{herzog2010algorithms}.

In the following we assume $V_h \subset H^1(\Omega)$ to be the standard finite element space with Lagrange polynomials on a triangulation or structured mesh in $\Omega$.
Let $M$ be the mass matrix and $S$ the stiffness matrix corresponding to the model problem \eqref{eq:state}, both in terms of $V_h$.
Further, let $\tilde{M}$ be a reduced mass matrix in the sense $\int_\Om p_1 p_2\;dx$ for all $p_1, p_2 \in V_h$.
We can then formulate the finite dimensional optimization problem
\begin{equation*}
\begin{aligned}
&\min\limits_{f \in V_h}\; J(u,f) \coloneqq \frac{ 1}{ 2} \left(u-\bar{u}\right)^T \tilde{M} \left(u-\bar{u}\right) + \frac{\mu}{2} f^T M f\\[0.3cm]
\text{s.t. }& Su = Mf \;\; \text{and}\;\; 0 \leq f \leq 1. \\
\end{aligned}
\end{equation*}
From this we can derive the reduced cost functional
\begin{equation*}
\hat{J}(f) \coloneqq \frac{1}{2} \left[ \left( S^{-1}Mf\right)^T \tilde{M} S^{-1} M f - 2 \left( S^{-1}Mf\right)^T \tilde{M} \bar{u} + \bar{u}\tilde{M} \bar{u} + \mu f^T M f \right]
\end{equation*}
and further by differentiation with respect to $f$ the reduced gradient
\begin{equation*}
\nabla \hat{J}(f) = M^T S^{-T} \tilde{M} S^{-1} M f - M^T S^{-T} \tilde{M} \bar{u} + \mu M f
\end{equation*}
and the Hessian
\begin{equation*}
\nabla^2 \hat{J}(f) = M^T S^{-T} \tilde{M} S^{-1} M + \mu M.
\end{equation*}
Note that $S^{-1}$ and $S^{-T}$ are only used as symbols for the state \eqref{eq:state} and adjoint \eqref{eq:adjoint} solver.
We apply for both PDEs an iterative solver, which is further discussed in Section \ref{sec:numerics}.
Similarly a multiplication with the mass matrix $M$ means the assembly of a volumic source term.
This matrix is not built explicitly.

Let $\mathcal{J}$ be the index set of the degrees of freedom in $V_h$.
Then we define the active set for upper and lower bounds
\begin{equation}
\begin{aligned}
\mathcal{A}^k_+ \coloneqq \lbrace j \in \mathcal{J} \colon \lambda^k + \gamma \left( f^k - f_+ \right) > 0 \rbrace\\
\mathcal{A}^k_- \coloneqq \lbrace j \in \mathcal{J} \colon \lambda^k + \gamma \left( f^k - f_- \right) < 0 \rbrace
\end{aligned}
\label{eq:active_inactive_set}
\end{equation}
and the active set $\mathcal{A}^k \coloneqq \mathcal{A}^k_+ \cup \mathcal{A}^k_-$, respectively.
Further, let $\mathcal{I}^k \coloneqq \mathcal{J}\setminus\mathcal{A}^k$ denote the inactive set.
With $\chi_{\mathcal{A}^k}, \chi_{\mathcal{A}^k_-}$ and $\chi_{\mathcal{A}^k_+}$ we denote restriction operators to the active sets.

We can then formulate the semismooth Newton system, which reflects the primal-dual active set algorithm
\begin{equation*}
\begin{bmatrix}
\nabla^2 \hat{J} (f^k) & I\\
\gamma \chi_{\mathcal{A}^k}  & - \chi_{\mathcal{I}^k}
\end{bmatrix}
\begin{pmatrix}
\delta f\\
\delta \lambda
\end{pmatrix}
= -
\begin{pmatrix}
\nabla \hat{J}(f^k) + \lambda^k\\
\gamma \chi_{\mathcal{A}^k_{+}} (f^k - f^+) + \gamma \chi_{\mathcal{A}^k_{-}} (f^k - f^-) - \chi_{\mathcal{I}^k} \lambda^k
\end{pmatrix}
\end{equation*}
for the case $\gamma = \mu$.
For a further discussion on the choice of the parameter $\gamma$ see \cite{bergounioux1997augemented}.
This system can be transformed to a symmetric version
\begin{equation}
\begin{bmatrix}
\nabla^2 \hat{J} (f^k) & \chi_{\mathcal{A}^k}^T\\
\chi_{\mathcal{A}^k}  & 0
\end{bmatrix}
\begin{pmatrix}
\delta f\\
\delta \lambda_{\mathcal{A}^k}
\end{pmatrix}
= -
\begin{pmatrix}
\nabla \hat{J}(f^k) + \chi_{\mathcal{A}^k} \lambda^k\\
\chi_{\mathcal{A}^k_{+}} (f^k - f^+) + \chi_{\mathcal{A}^k_{-}} (f^k - f^-)
\end{pmatrix}
\label{eq:symmetric_semismooth_newton}
\end{equation}
where $\delta \lambda_{\mathcal{A}^k} \in \mathbbm{R}^{\left\vert\mathcal{A}^k\right\vert}$ is the restriction of Lagrange multiplier $\delta \lambda$ to the active set.
Updates are then computed via $\lambda^{k+1} \gets \lambda^k + \chi_{\mathcal{A}^k}^T \delta \lambda_{\mathcal{A}^k}$ and $\lambda_{\mathcal{I}^k}^{k+1} \gets 0$.
This is computationally more favorable since it allows to use MINRES instead of {GMRES} as linear solver.
Yet, it should be remarked that the dimension of system \eqref{eq:symmetric_semismooth_newton} changes in every iteration $k$ when the number of indices in the active set $\mathcal{A}^k$ changes.

The advantage of an iterative Krylov method is, that we do not need to construct the system matrix in \eqref{eq:symmetric_semismooth_newton}, which would require to compute $S^{-1}$ and $S^{-T}$.
Only a function for matrix vector products is necessary, which then invokes the iterative solvers for state \eqref{eq:state} and adjoint \eqref{eq:adjoint} equations.

\subsection{Shape optimization approach}
\label{sec:shape_optimization}
In the previous subsection we describe an algorithm that yields a solution to the relaxation of our original optimization problem.
This minimizer takes values in the interval $[0, 1]$.
For the moment we assume that we have a rounding strategy, which determines regions where the source term (right hand side in problem \eqref{eq:original_problem}) is active and where not.
This strategy is later described in more detail.
We now concentrate on a gradient descent for the shape problem \eqref{eq:shape_problem}, which is subsequent to \eqref{eq:relaxed_problem}.

We start with the definition of shape functionals like in the objective function in problem \eqref{eq:shape_problem}.
\begin{mydefinition}
\label{def:shape_functional}
Let $ D \subset \mathbbm{R}^d$, $D \neq \emptyset$ for $d \in \mathbbm{N}$, $d \geq 2$ and $A \subset \lbrace \Omega \colon\, \Omega \subset D\rbrace $.
A function
\begin{equation*}
J \colon A \rightarrow \mathbbm{R},\, \Omega \mapsto J(\Omega)
\end{equation*}
is called a shape functional.
\end{mydefinition}
In order to analyze sensitivities of shape functionals with respect to deformations of the underlying shape we define the perturbation of a domain.
\begin{mydefinition}
\label{def:perturbed_domain}
Let $D$ be as in Definition \eqref{def:shape_functional} and $\lbrace F_t \rbrace_{t \in [0,T]}$ for $T > 0$ a family of mappings $F_t \colon\, \overline{D} \rightarrow \mathbbm{R}^d$ such that $F_0$ is the identity on $\mathbbm{R}^d$ restricted to $\overline{D}$.
We define the perturbation of a domain $\Omega \subset D$ by
\begin{equation*}
\Omega_t \coloneqq \lbrace F_t(x) \colon\, x\in \Omega\rbrace
\end{equation*}
and the perturbed boundary $\Gamma_t$ of $\Gamma = \partial \Omega$ analogously.
\end{mydefinition}
In the field of shape optimization there are two common choices for $F_t$ (for further reading see \cite{sokolowski1992introduction,delfour2001shapes}).
Given a differentiable vector field $v : \mathbbm{R}^d \rightarrow \mathbbm{R}^d$ the perturbation of identity has the following form
\begin{equation}
F_t(x) = x + tv(x).
\label{eq:perturbation_of_identity}
\end{equation}
Alternatively, the velocity method, where shapes are assumed to be particles following a transport equation in terms of the velocity field $v$ as
\begin{equation*}
\begin{aligned}
\frac{\partial \xi_x(t)}{\partial t} &= v\left(\xi_x(t)\right)\, , \; t \in [0, T]\\
\xi_x(0) &= x\\
F_t(x) &= \xi_x(t).
\end{aligned}
\label{eq:velocity_method}
\end{equation*}
The proof of Theorem \ref{thm:shape_derivative} is based on the perturbation of identity.
Yet, in Section \ref{sec:combined_optimization} and \ref{sec:numerics} we will focus on deformations according to the velocity method, which is a natural choice when dealing with level-sets.

The sensitivity analysis in this work makes use of the concept of material derivatives.
\begin{mydefinition}
\label{def:material_derivative}
Let $\Omega, \Omega_t, F_t$ be as in Definition \ref{def:perturbed_domain} and $ \lbrace u_t \colon \Omega_t \rightarrow \mathbbm{R}, t<T\rbrace$ a family of functions on perturbed domains. The material derivative of $p(=p_0)$ in $x\in \Omega$ is given by
\begin{equation*}
\dt{u}(x) = \lim\limits_{t \searrow 0} \frac{(u_t \circ F_t )(x) - u (x)}{t}.
\end{equation*}
In the following we use both $\dt{u}$ and $D_m(u)$ for the material derivative of $u$.
\end{mydefinition}
Finally, the directional derivative of a shape is defined as follows:
\begin{mydefinition}
\label{def:shape_derivative}
Let again $ D \subset \mathbbm{R}^d$, $D \neq \emptyset$ for $d \in \mathbbm{N}$, $d \geq 2$ be open.
Further, let $\Omega \subset D$ and $J$ be a shape functional according to Definition \ref{def:shape_functional}.
The shape derivative of $J$ at $\Omega$ in direction $v \in C_0^1(D, \mathbbm{R}^d)$ is defined as
\begin{equation*}
dJ(\Omega)[v] \coloneqq \lim_{t \searrow 0} \frac{J(\Omega_t) - J(\Omega)}{t}.
\end{equation*}
\end{mydefinition}
With these definitions we can formulate the directional derivative of the objective function in the shape optimization problem.
\begin{mytheorem}
\label{thm:shape_derivative}
Let $u \in H^1(\Omega)$ denote the weak solution of \eqref{eq:state}, which additionally fulfills the constraints in \eqref{eq:shape_problem}, with a source $f$ depending on $\Oint$.
Further, let $w \in H^1(\Omega)$ be the weak solution of the adjoint equation \eqref{eq:adjoint}.
The shape deriviative of the functional $J$ in direction of a vector field $v \in C_0^1(D, \mathbbm{R}^d)$ is given by
\begin{multline}
dJ(u, \Omega)[v] = \int_\Omega - c(\nabla u)^T \left(\nabla v + (\nabla v)^T \right)\nabla w - u \velocity^T(\nabla v)^T \nabla w \\
+ \mathrm{div} (v) \left[\frac{1}{2} \chi_{\Om}(u-\bar{u})^2 + c(\nabla  u)^T \nabla w - u \velocity^T \nabla w -fw \right] dx.
\label{eq:shape_derivative}
\end{multline}
\end{mytheorem}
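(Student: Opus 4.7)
The plan is to apply a Lagrangian (reduced) calculus for the shape derivative, exploiting that the Lagrangian $G$ in (\ref{eq:lagrangian}) satisfies $J(\Omega_t) = G(u(\Omega_t),\,w(\Omega_t),\,f(\Omega_t))$, and that at the state/adjoint pair the partial derivatives of $G$ with respect to $u$ and $w$ both vanish (by (\ref{eq:state}) and (\ref{eq:adjoint}) respectively). Consequently, after pulling every integral from $\Omega_t$ back to $\Omega$ by the perturbation of identity $F_t(x)=x+tv(x)$, the material derivatives $D_m u$ and $D_m w$ that arise through the chain rule drop out, since they couple only to the state and adjoint residuals. What remains is the explicit $t$--derivative of the pulled back integrand, and that is what I would compute.

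Concretely, I would first rewrite each volume integral over $\Omega_t$ as an integral over $\Omega$ via $\int_{\Omega_t}\phi\,dy = \int_\Omega (\phi\circ F_t)\,|\det DF_t|\,dx$, and use the identity $\nabla(\phi\circ F_t^{-1})\circ F_t = (DF_t)^{-T}\nabla\phi$ to transform gradients. The key first-order expansions at $t=0$,
$$\det(DF_t) = 1 + t\,\mathrm{div}(v) + O(t^2), \qquad (DF_t)^{-T} = I - t(\nabla v)^T + O(t^2),$$
immediately yield a $\mathrm{div}(v)$ factor multiplying every integrand together with a $-(\nabla v)^T$ factor each time a gradient has been transformed. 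The diffusion term $\int c(\nabla u)^T\nabla w\,dx$ picks up two such copies which assemble into the symmetric contribution $-c(\nabla u)^T\bigl(\nabla v+(\nabla v)^T\bigr)\nabla w$; the advection term $-\int u\,\velocity^T\nabla w\,dx$ picks up only one, namely $-u\,\velocity^T(\nabla v)^T\nabla w$; the source term $-\int fw\,dx$ and the tracking term carry no gradients and so contribute only through $\mathrm{div}(v)$, using that $f\circ F_t = f$ because $\Oint$ is co-transported with $F_t$, and that $\chi_{\Om}$ moves together with $\Om$. Since $v$ vanishes on $\partial\Omega$ by assumption, the boundary term $\int_{\partial\Omega} wgu\,ds$ is invariant under $F_t$ and contributes nothing.

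Collecting all contributions reproduces the right-hand side of (\ref{eq:shape_derivative}). The main obstacle is the rigorous justification of swapping the $t$--limit with the integrals: one must know that the shape derivative of $J$ exists, that the material derivatives of state and adjoint are well-defined in $H^1(\Omega)$, and that the pulled back integrand is differentiable in $t$ at $0$ with an integrable majorant. Under $v \in C_0^1(D,\mathbbm{R}^d)$, together with the coercivity and well-posedness established in Remarks \ref{rmrk:adjoint}--\ref{rmrk:coercivity}, this follows by a standard implicit-function / continuous-dependence argument on the bilinear form $a_1$, in the spirit of \cite{sokolowski1992introduction,delfour2001shapes}.
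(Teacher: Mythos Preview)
Your proposal is correct and follows essentially the same Lagrangian route as the paper: both eliminate the material derivatives $\dot u,\dot w$ by exploiting that the saddle-point conditions \eqref{eq:state}--\eqref{eq:adjoint} annihilate them, and both compute the remaining explicit $t$--derivative term by term (your Jacobian expansions $\det DF_t = 1+t\,\mathrm{div}(v)+O(t^2)$ and $(DF_t)^{-T}=I-t(\nabla v)^T+O(t^2)$ are exactly the content of the material-derivative rules the paper invokes). The only methodological difference is in the justification: the paper casts $J$ as $\inf_u\sup_w G$ and cites the Correa--Seeger theorem \cite{correa1985directional} to differentiate the min--max directly, which sidesteps having to prove that $\dot u,\dot w$ exist in $H^1(\Omega)$, whereas you propose to secure their existence via an implicit-function argument on $a_1$; both are standard and lead to the same formula, with the Correa--Seeger route requiring slightly weaker hypotheses.
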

\begin{proof}

Following the presentation in \cite[Chapter 10, Section 5.2]{delfour2001shapes} we reformulate the objective function $J$ as the solution of the saddle point problem
\begin{equation}
J(u, \Omega) = \adjustlimits\inf_{u \in H^1(\Omega)} \sup_{w \in H^1(\Omega)} G(u,w,\Omega)
\label{eq:inf_sup}
\end{equation}
where $w$ plays the role of the Lagrange multiplier.
We then apply Theorem 2.1 in \cite{correa1985directional} in order to differentiate the right hand side of \eqref{eq:inf_sup} in direction of a vector field $v \in C_0^1(D, \mathbbm{R}^d)$ according to the transformation \eqref{eq:perturbation_of_identity}.
The following two identities show the material derivative of integrated quantities for both a volume integral over $\Omega$ and a surface integral over $\partial \Omega$
\begin{equation*}
\begin{aligned}
D_m \left( \int_{\Omega_t} p_t\, dx \right) &= \int_\Omega \dt{p} + \mathrm{div} (v) p\, dx\\
D_m \left( \int_{\partial \Omega_t} p_t\, dx \right) &= \int_{\partial \Omega} \dt{p} + \mathrm{div}_{\partial \Omega} (v) p\, dx.
\end{aligned}
\end{equation*}
For more details and a proof we refer the reader to \cite{haslinger2003advancecs,welker2016efficient}.
We then obtain the shape derivative at $\Omega$ in direction $v$
\begin{multline}
dG\left(u,w,f(\Omega)\right)[v] = \frac{d}{dt} G \left(u_t,w_t, f(\Omega_t)\right)\Bigr|_{t=0} =\\
\int_\Omega D_m \left[ \frac{1}{2} \chi_{\Om}(u-\bar{u})^2 + c(\nabla  u)^T \nabla w - u\velocity^T \nabla w -fw \right]\\
+ \mathrm{div} (v) \left[\frac{1}{2} \chi_{\Om}(u-\bar{u})^2 + c(\nabla  u)^T \nabla w - u\velocity^T \nabla w -fw \right]\; dx\\
+ \int_{\partial \Omega} D_m (wgu) + \mathrm{div}_{\partial \Omega}(v) w g u \; ds.
\label{eq:pre_shape_derivative}
\end{multline}
Under the assumption that $v$ vanishes in a small neighborhood of $\partial \Omega$ we can neglect $\mathrm{div}_{\partial \Omega}(v)$.
This is reasonable due to the fact, that the outer shape of $\Omega$ is not intended to be a variable in the optimization.
Utilizing rules for the material derivatives (cf.\ \cite{welker2016efficient}) it follows that
\begin{align*}
D_m(\frac{1}{2}\chi_{\Om}\left(u-\bar{u})^2\right) &= \chi_{\Om}\dt{u}(u-\bar{u})\\
D_m\left(c(\nabla  u)^T \nabla w\right) &= c(\nabla \dt{u})^T \nabla w + c(\nabla u)^T \nabla \dt{w} - c(\nabla u)^T (\nabla v + (\nabla v)^T)\nabla w\\
D_m\left(u\velocity^T \nabla w\right) &= u(\velocity^T\nabla \dt{w} - \velocity^T(\nabla v)^T \nabla w) + \dt{u}\velocity^T \nabla w\\
D_m\left(fw\right) &= \dt{f}w + f\dt{w}.
\end{align*}
Plugging this into equation \eqref{eq:pre_shape_derivative} and verifying that $\dt{f} = \dt{\chi_\Oint} = 0$ we obtain
\begin{multline*}
dG\left(u,w,f(\Omega)\right)[v] =\\
\int_\Omega \chi_{\Om}\dt{u}(u-\bar{u}) + c(\nabla \dt{u})^T \nabla w + c(\nabla u)^T \nabla \dt{w} - c(\nabla u)^T \left(\nabla v + (\nabla v)^T \right)\nabla w\\
- u(\velocity^T\nabla \dt{w} - \velocity^T(\nabla v)^T \nabla w) - \dt{u}\velocity^T \nabla w - \dt{f}w - f\dt{w} \; dx\\
+ \mathrm{div} (v) \left[\frac{1}{2} \chi_{\Om}(u-\bar{u})^2 + c(\nabla  u)^T \nabla w - \velocity^T \nabla w) -fw \right] dx+ \int_{\partial \Omega} D_m (wgu)\; ds.
\end{multline*}
Since $u$ and $w$ are solutions to \eqref{eq:state} and \eqref{eq:adjoint}, respectively, we can plug $\dt{u}$ and $\dt{w}$ in the role of test functions and obtain \eqref{eq:shape_derivative}.
\end{proof}

Note that in the formulation of the shape optimization problem \eqref{eq:shape_problem} a set of feasible shapes is not specified.
This discussion is beyond the scope of this work and can be found for instance in \cite{schulz2016efficient}.
Here we assume that the inner product of a suitable shape space is given by the following bilinear form:
Let ${a_2 : H^1\left(\Omega, \mathbbm{R}^d\right) \times H^1\left(\Omega, \mathbbm{R}^d\right) \rightarrow \mathbbm{R}}$ be given by
\begin{equation}
a_2(p,q) \coloneqq (1-\alpha) \int_\Omega \sum_{i=1}^d (\nabla p_i)^T \nabla q_i \, dx + \alpha \int_\Omega \sum_{i=1}^d p_i\, q_i\, dx
\label{eq:H1_bilinear_form}
\end{equation}
for an $\alpha \in (0,1)$.
We then solve
\begin{equation}
a(\psi, v) = dJ(u,w,\Omega)[v] \quad \forall\; v \in H^1\left(\Omega, \mathbbm{R}^d\right)
\label{eq:gradient_representation}
\end{equation}
in order to obtain a representation $\psi$ for the shape gradient.
In \cite{schulz2016computational} the bilinear form of linear elasticity is chosen and advantages with respect to mesh quality are discussed.
Yet, here we concentrate on \eqref{eq:H1_bilinear_form} since no boundary conditions have to be specified.
This is important since we assume that the position of shapes are unknown.
It is thus possible that shapes are close to the outer boundary of $\Omega$.
If we specify a bilinear form for the gradient representation, which is based on, e.g., homogeneous Dirichlet conditions, the deformation field $\psi$ is influenced in the neighborhood of the shapes in an unintended manner.

\subsection{A combined optimization algorithm}
\label{sec:combined_optimization}
A common approach (see for instance \cite[Section 6.3]{mohammadi2001applied}) would be to use the vector field $\psi$ as a deformation to the finite element mesh in $\Omega$.
This deformation can either be in the form of the perturbation of identity \eqref{eq:perturbation_of_identity} or the velocity method \eqref{eq:velocity_method}.
Assuming that the interface between $\Oout$ and $\Oint$ is resolved within the finite elements, this approach can be used as a descent direction for the objective in \eqref{eq:shape_problem}.
The advantage of this approach is that $\Oint$ and the source term $f$ are perfectly resolved within the mesh in each optimization iteration.
On the other hand, one has to steadily reassemble all arising discretization operators for the PDEs.

Alternatively, we follow the so called level-set approach (see for instance \cite{allaire2004structural}).
The interface between $\Oint$ and $\Oout$ is implicitly prescribed by a level-set of a function $\phi : \Omega \rightarrow \mathbbm{R}$ in the form
\begin{equation*}
x \in \begin{cases}
\Oout &\;\mathrm{if}\; \phi(x) < 0\\
\partial \Oint &\;\mathrm{if}\; \phi(x) = 0\\
\Oint &\;\mathrm{if}\; \phi(x) > 0.
\end{cases}
\end{equation*}
The advantage of this approach is, that interfaces do not have to be resolved in the mesh.
As a result structured grids are possible for the discretization of the PDEs leading to major computational advantages.
Yet, the interpretation of $\psi$ as a descent direction is more complicated compared to a mesh deformation.
For this purpose we define the level-set function ${\phi: \Omega \times \left( 0 , T \right] \rightarrow \mathbbm{R}}$ to be a density distribution in an advection diffusion transport equation
\begin{equation}
\begin{aligned}
\frac{\partial \phi}{\partial t} - \epsilon \Delta \phi + \mathrm{div} (\psi \phi) &= 0 \quad\;\; \mathrm{in} \; \Omega \times \left( 0 , T \right]\\[0.2cm]
\phi \psi^T n - \epsilon \frac{\partial \phi}{\partial n} &= g \phi \quad \mathrm{on} \; \partial \Omega \times \left( 0 , T \right]\\[0.2cm]
\phi &= \tilde{\phi} \quad\;\; \mathrm{in} \; \Omega \times \lbrace 0 \rbrace
\end{aligned}
\label{eq:time_dependent_adv_diff}
\end{equation}
where again $g = \max\lbrace 0, \psi^T n\rbrace$ is chosen.
The associated variational form is given by:

Find $\phi \in L^2\left(0,\, T\colon\, H^1(\Omega)\right)$ with $\phi_t \in L^2\left(0,\, T\colon\, H^1(\Omega)^\prime \right)$ such that
\begin{equation}
\int_{\Omega} p \phi_t(t)\; dx \;+\; \int_{\Omega} c (\nabla \phi(t))^T \nabla p - \phi(t) \psi^T \nabla p \; dx\; +\; \int_{\partial \Omega} p g \phi(t)\;ds = 0
\label{eq:weak_time_advection_diffusion}
\end{equation}
for all $p \in H^1(\Omega)$, for $t$ pointwise a.e.\ in $(0,\,T)$ and $\phi(0) = \tilde{\phi}$.

Note that the diffusive part in this model might look surprising, since we want a pure transport of the level-set function $\tilde{\phi}$.
Yet, the small influence of diffusion, which is controlled by a factor $\epsilon$, leads to smooth shapes and makes the separation and union of shapes possible.
This is further discussed in Section \ref{sec:numerics}.
The shape gradient $\psi$ enters the system as a time-dependent velocity field.
The velocity $\psi$ is assumed to be piece-wise constant in time and changes when a new descent direction in the optimization problem is computed.
\begin{myremark}
Note that the weak formulation requires $\psi \in L^\infty (\Omega)$ which we can not guarantee.
Since $\psi$ is the weak solution of \eqref{eq:gradient_representation}, further assumptions on the shape derivative $dJ(u,w,\Omega)[v]$ would be required.
Yet, in the discretized problem for a finite dimensional subspace the combination of \eqref{eq:gradient_representation} and the transport equation of the level set \eqref{eq:weak_time_advection_diffusion} provides the desired result.
\end{myremark}
From a computational point of view it is a crucial advantage that the shape derivative \eqref{eq:shape_derivative} and gradient $\psi$, given by \eqref{eq:gradient_representation}, can be completely described by volume terms.
It is thus not necessary to reconstruct the implicit surface ${\lbrace x \in \Omega : \phi(x) = 0\rbrace}$ at any point in the algorithm.
However, care has to be taken by the integration of the right hand side in \eqref{eq:state} since $f = \chi_{\Oint}$ is not representable in $V_h$.
The choice of appropriate quadrature rules is discussed in Section \ref{sec:numerics}.
\begin{algorithm}
\caption{Combined optimization algorithm}
\label{alg1}
\begin{algorithmic}[1]
\itemsep0.05cm
\STATE Choose $f^0=0.5$ as initial guess
\STATE $k \gets 0$
\WHILE{$\left\Vert \begin{pmatrix}\delta f\\ \delta \lambda_{\mathcal{A}^k}\end{pmatrix} \right\Vert_2 > \epsilon_1$}
	\itemsep0.05cm
	\STATE Evaluate $\mathcal{A}^k_+$, $\mathcal{A}^k_-$ and $\mathcal{I}^k$ according to \eqref{eq:active_inactive_set}
	\STATE Solve system \eqref{eq:symmetric_semismooth_newton} with MINRES
	\STATE $f^{k+1} \gets f^k + \delta f$, $\;\lambda^{k+1} \gets \lambda^k + \chi_{\mathcal{A}^k}^T \; \delta \lambda_{\mathcal{A}^k}$ and $\lambda_{\mathcal{I}^k}^{k+1} \gets 0$
	\STATE $k \gets k+1$
\ENDWHILE
\STATE Compute initial level-set function $\phi^0(x) \gets \frac{f^k(x) - f^k_\mathrm{min}}{f^k_\mathrm{max} - f^k_\mathrm{min}} - 1$, $x \in \Omega$
\STATE Optional: Interpolate $\phi^0$ from coarse grid $V_H$ to finer grid $V_h$, $H > h$
\STATE $k \gets 0$
\WHILE{$\left\Vert \psi \right\Vert_{L^2(\Omega)} > \epsilon_2$}
	\STATE Solve state equation \eqref{eq:state} for $u$
	\STATE Solve adjoint equation \eqref{eq:adjoint} for $w$
	\STATE Compute shape gradient by solving $a(\psi, v) = dJ(u,w,\Omega)[v]$ for all $v \in \left(V_h\right)^d$ according to \eqref{eq:gradient_representation}
	\STATE Propagate the level-set by solving \eqref{eq:time_dependent_adv_diff} for $(0, T]$ with $\tilde{\phi} = \phi^k$ and set ${\phi^{k+1} \gets \phi(T)}$
	\STATE $k \gets k+1$
\ENDWHILE
\end{algorithmic}
\end{algorithm}

Algorithm \ref{alg1} shows the combination of the two optimization approaches discussed above.
From line 1 to 9 we have the classical primal-dual active set algorithm for the relaxed problem.
Then the initial level-set function $\phi^0$ is defined to reflect the interface $\partial \Oint$ at the mean value of the minimizer $f^k$.
This is done in line 9.
Since $f_\mathrm{min} = 0$ or $f_\mathrm{max} = 1$ is not necessarily fulfilled, we choose this rounding strategy instead of $\partial \Oint = \left\lbrace x \in \Omega : f(x) = 0.5\right\rbrace$.
In any case we obtain $\phi^0 \in [-0.5,\, 0.5]$.
\begin{myremark}
We need additional assumptions for the initial level-set $\phi^0$ to produce a feasible shape $\Oint$.
A minimum $f_\mathrm{min}$ and maximum $f_\mathrm{max}$ has to exist in $\overline{\Omega}$.
Further, the set $\left\lbrace x \in \Omega : \phi^0(x) = 0\right\rbrace$ has to cut out the domain $\Oint$.
At least $\phi^0 \in C(\overline{\Omega})$ would be desirable.
Therefor, we could modify the Lagrangian \eqref{eq:adjoint} by adding the regularization of the relaxed problem and neglect the condition $0 \leq f \leq 1$ leading to $\tilde{G} (u,w,f) = G(u,w,f)  + \frac{\mu}{2} \int_\Omega f^2 \; dx$.
The third part of the optimality system given by the partial derivative of $\tilde{G}$ with respect to $f$ is $\mu f - w = 0$ in strong form.
We can thus transfer the regularity of the adjoint $w$ to $f$, which is under additional assumptions in $H^2(\Omega)$.
Since in this work $\Omega$ is a $d\in \lbrace2,\,3 \rbrace$ dimensional domain, the Sobolev embedding theorem yields $H^2(\Omega) \hookrightarrow C(\Omega)$.
For further discussions we refer the reader to \cite[Section 6.3]{evans1993partial} for the homogeneous Dirichlet case and \cite{nittka2011regularity} for Robin-type boundary conditions.
\end{myremark}

With the initial guess of the location, number and shape of inclusions $\Oint$ we then proceed in line 10 of Algorithm \ref{alg1}.
This step is optional and interpolates $\phi^0$ given in a coarse grid finite element space $V_H$ to a finer space $V_h$.
This is further addressed in Section \ref{sec:numerics}.
The lines 12 to 18 finally implement the shape optimization problem.

\section{Numerical results}
\label{sec:numerics}
In this section we demonstrate the performance of the combined optimization Algorithm \ref{alg1}.
We therefore choose two test environments.
First a two dimensional setting where PDE solutions are computationally cheap and can be done by matrix factorization.
A second test case is a similar setting in three dimensions where we concentrate on the application of iterative solvers.
In both cases we want to emphasis the ability of the algorithm to change the topology of the domain $\Omega$ in the sense, that we are not restricted to the number of inclusions $\Oint$, which are determined as initial guess in line 10 of Algorithm \ref{alg1}.

\begin{figure}
\begin{center}\includegraphics[width=0.9\textwidth]{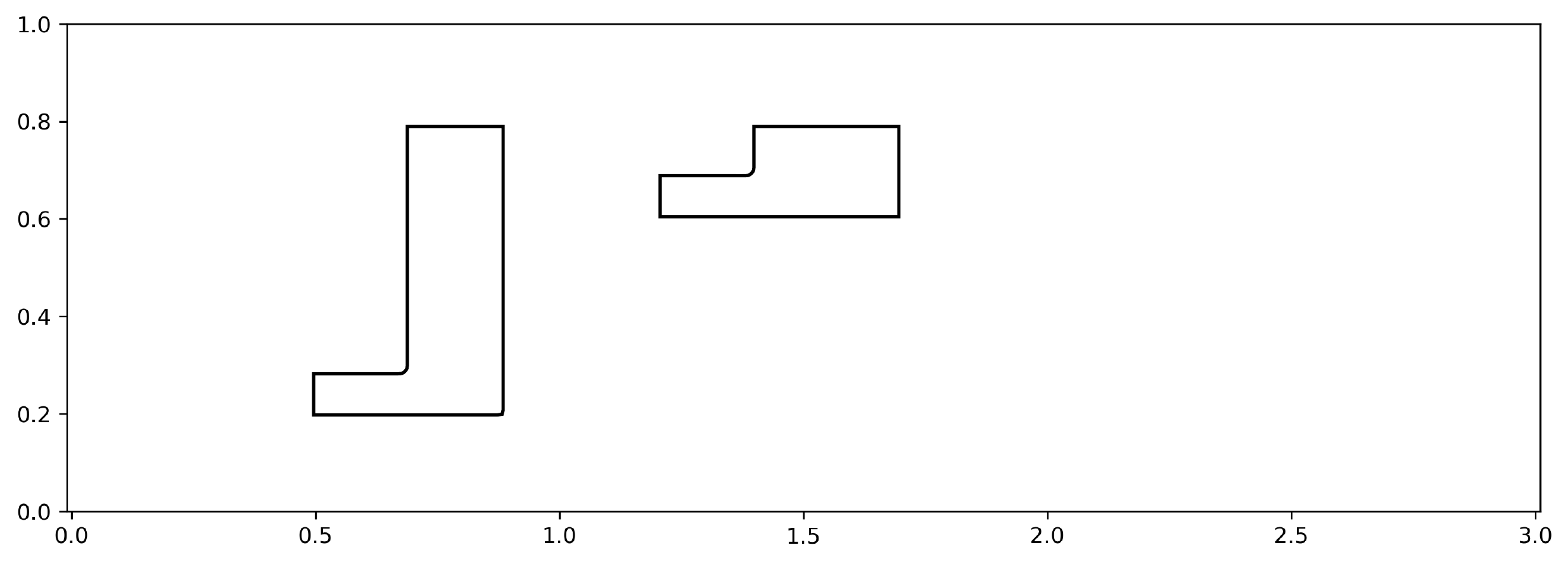}\end{center}
\caption{Shape of the source term for simulated data.}
\label{fig:2dsource}
\end{figure}
In the two dimensional setting the domain $\Omega$ with two inclusion $\Oint$ is visualized in Figure \ref{fig:2dsource}.
The domain is discretized by a $300\times 100$ equidistant, structured mesh and we choose $V_h$ to be the standard bilinear finite element space.
By choosing the source term $f=\chi_\Oint$ we compute a reference solution $u$ according to the model equations \eqref{eq:strong_form_advection_diffusion}.
We then add Gaussian noise to reference state and use it as measurements $\bar{u}$ for the optimization algorithm.
We choose the variance to be 5\% of the maximum value of the PDE state.
Let $u_\mathrm{max}$ be the maximum of the discretized solution $u$ the noise is then chosen according to $\mathcal{N}(0,\, u_\mathrm{max}\cdot 0.05)$.
The reference solution is depicted in Figure \ref{fig:2dstate} and the corresponding measurements in Figure \ref{fig:2dmeasurements}.
In a first test we assume full observability, which means that $\Om = \Omega$.
\begin{figure}
\begin{center}\includegraphics[width=0.9\textwidth]{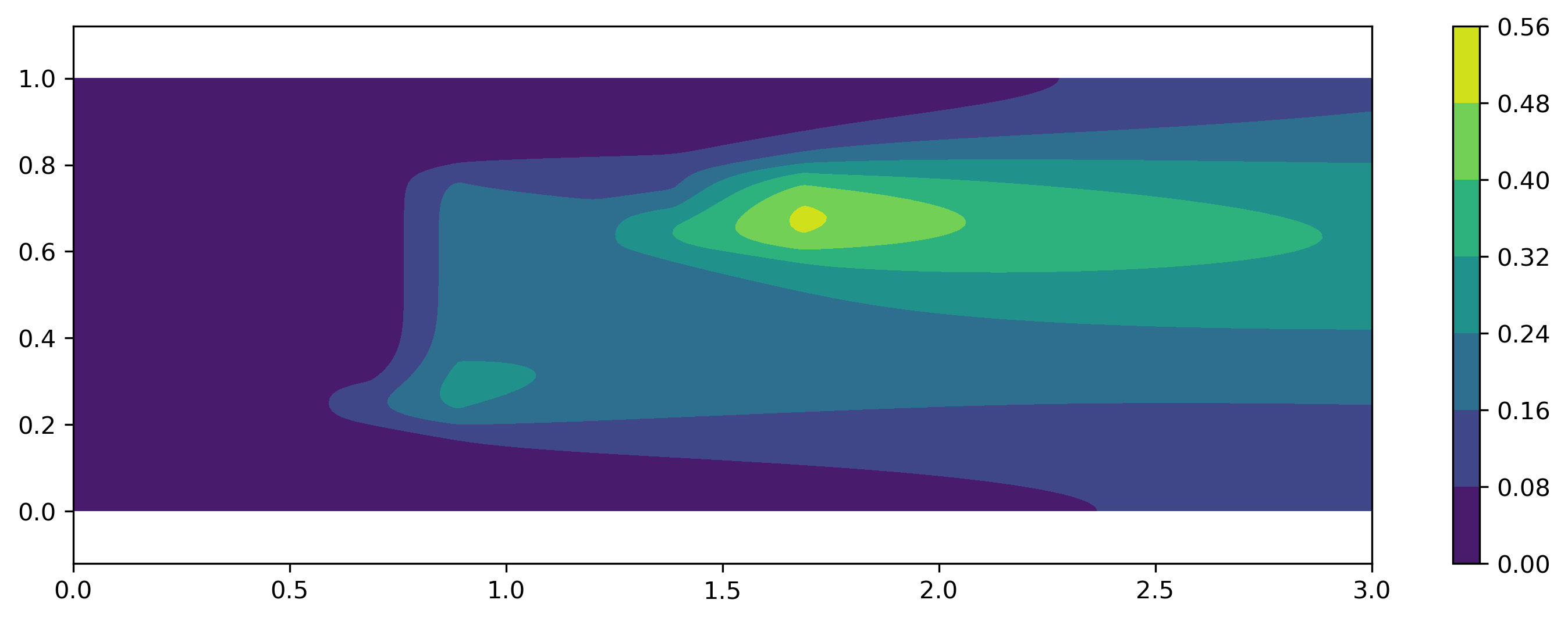}\end{center}
\caption{PDE solution with diffusion coefficient $c=0.01$ and and velocity field $\velocity=(1, 0)^T$ according to the source term depicted in Figure \ref{fig:2dsource}.}
\label{fig:2dstate}
\end{figure}
\begin{figure}
\begin{center}\includegraphics[width=0.9\textwidth]{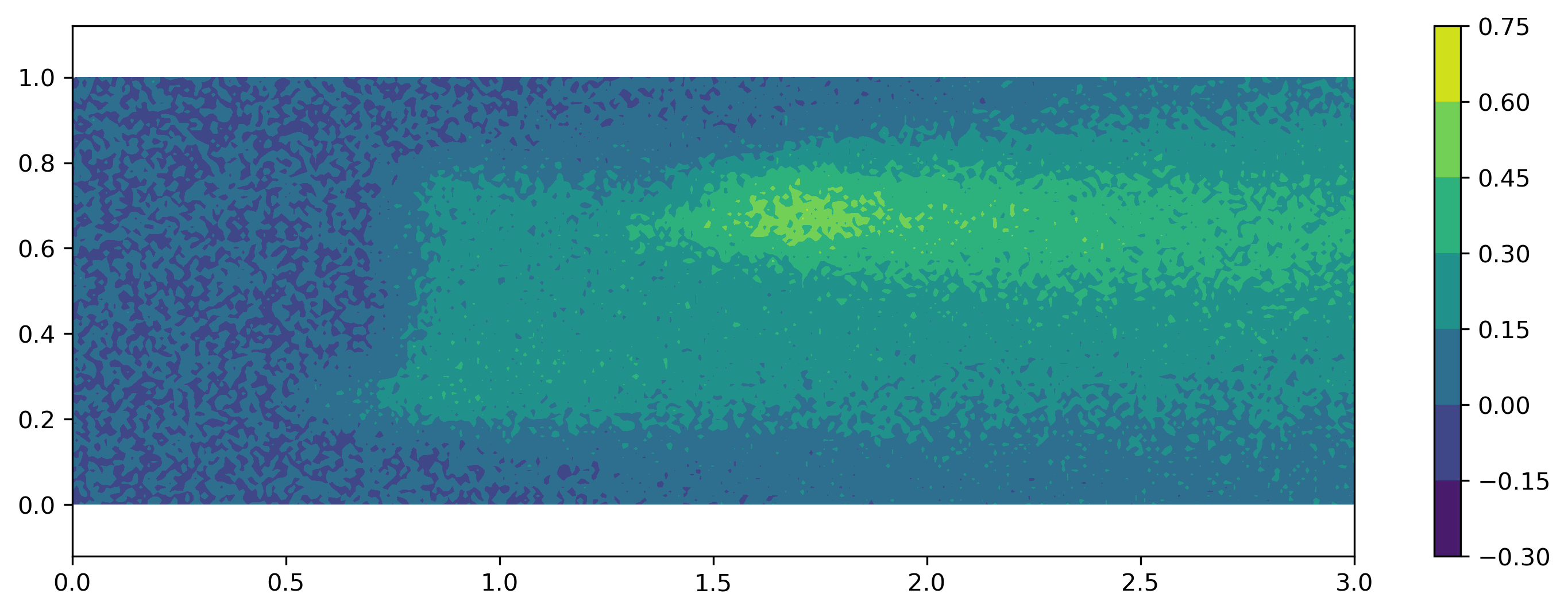}\end{center}
\caption{Advection-diffusion solution with normally distributed, additive noise according to $\mathcal{N}(0,\, u_\mathrm{max}\cdot 0.05)$.}
\label{fig:2dmeasurements}
\end{figure}

The underlying model is based on a diffusivity constant $c=0.01$ and a constant velocity field $\velocity = (1,0)^T$.
In order to recover the shapes we start the primal-dual active set algorithm with a regularization parameter $\mu = 5\mathrm{e}{-2}$ and accordingly $\gamma = 2\mathrm{e}{+1}$.
The effect of $\mu$ on the initial guess is as follows.
Decreasing $\mu$ leads to an initial solution that is closer to the original shape depicted Figure \ref{fig:2dsource}.
However there are unintended small shapes appearing all over the domain due to the noisy data.
Increasing the $\mu$ leads to an underestimation of the original shapes.
Yet, this initial guess is more smooth, which is visualized in Figure \ref{fig:2dcombined_levelset} as the dashed line.
\begin{figure}
\begin{center}\includegraphics[width=0.9\textwidth]{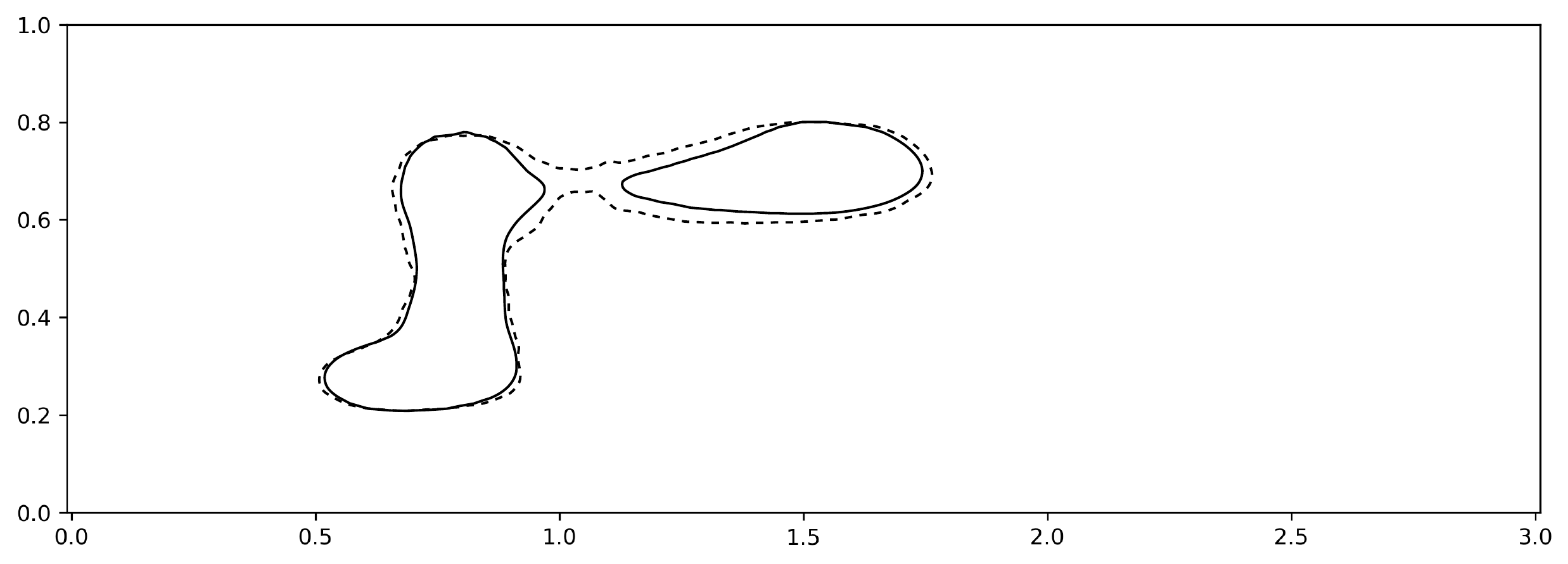}\end{center}
\caption{Initial level-set after relaxation problem and before shape optimization (dashed) and final level-set after shape optimization (solid).}
\label{fig:2dcombined_levelset}
\end{figure}
The strategy we follow is to find the smallest $\mu$ such that the number of shapes is stable and does not change, which is done manually for the moment.

Both the tolerance for the Newton iteration in line 3 of Algorithm \ref{alg1} and the tolerance for the inner MINRES iteration in line 5 are chosen to be $\epsilon_1 = 1\mathrm{e}{-12}$.
Figure \ref{fig:2diterations} visualized the number of MINRES iterations and the norm of the Newton residual.
\begin{figure}
\begin{center}\includegraphics[width=0.9\textwidth]{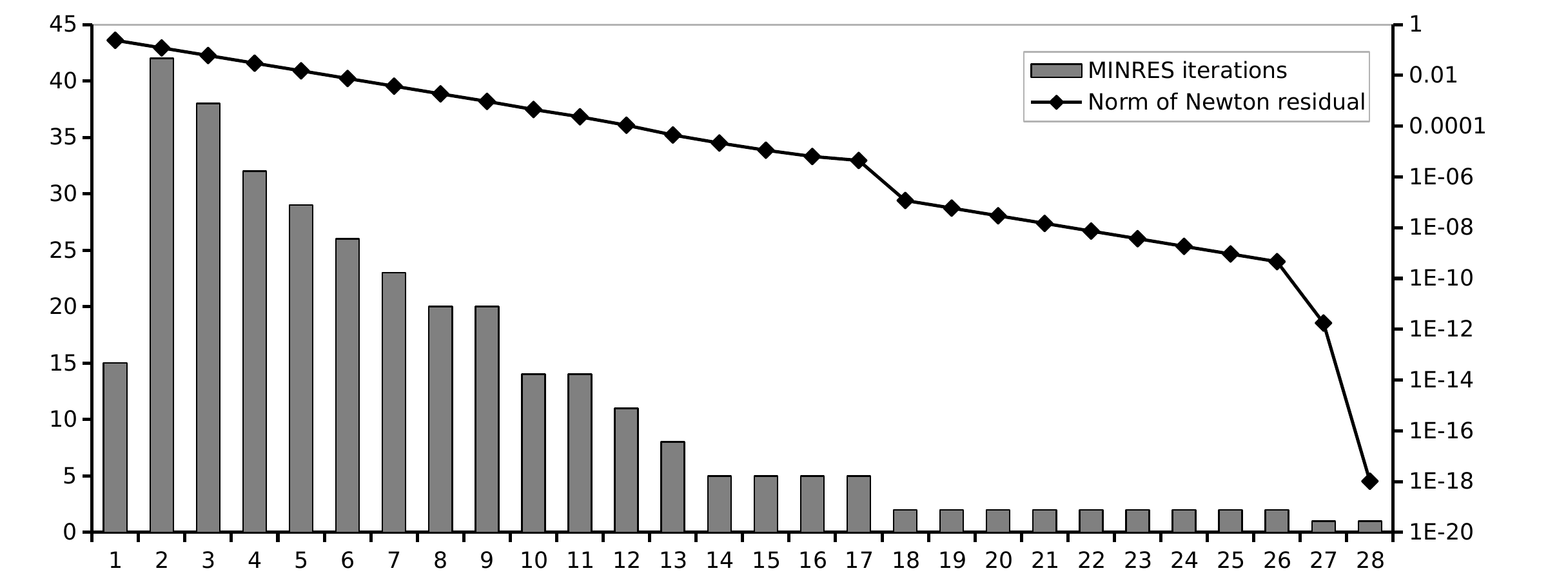}\end{center}
\caption{Number of MINRES iterations in each linear sub-problem in primal-dual active set algorithm together with l2-norm of Newton residual (log-scale).}
\label{fig:2diterations}
\end{figure}
In this example we only consider one computational grid for both the initial phase and the shape optimization.
Although it can be proven, that the outer Newton iteration has a mesh-independent convergence (cf.~\cite{hintermuller2002primal}), this is not true for the linear subproblems.
Note that in each MINRES iteration two PDEs have to be solved resulting in significant computational cost.
This issue is addressed later for the three dimensional case.

Figure \ref{fig:2dinitial_glyph} visualizes the initial shape and the first shape gradient.
\begin{figure}
\begin{center}\includegraphics[width=0.9\textwidth]{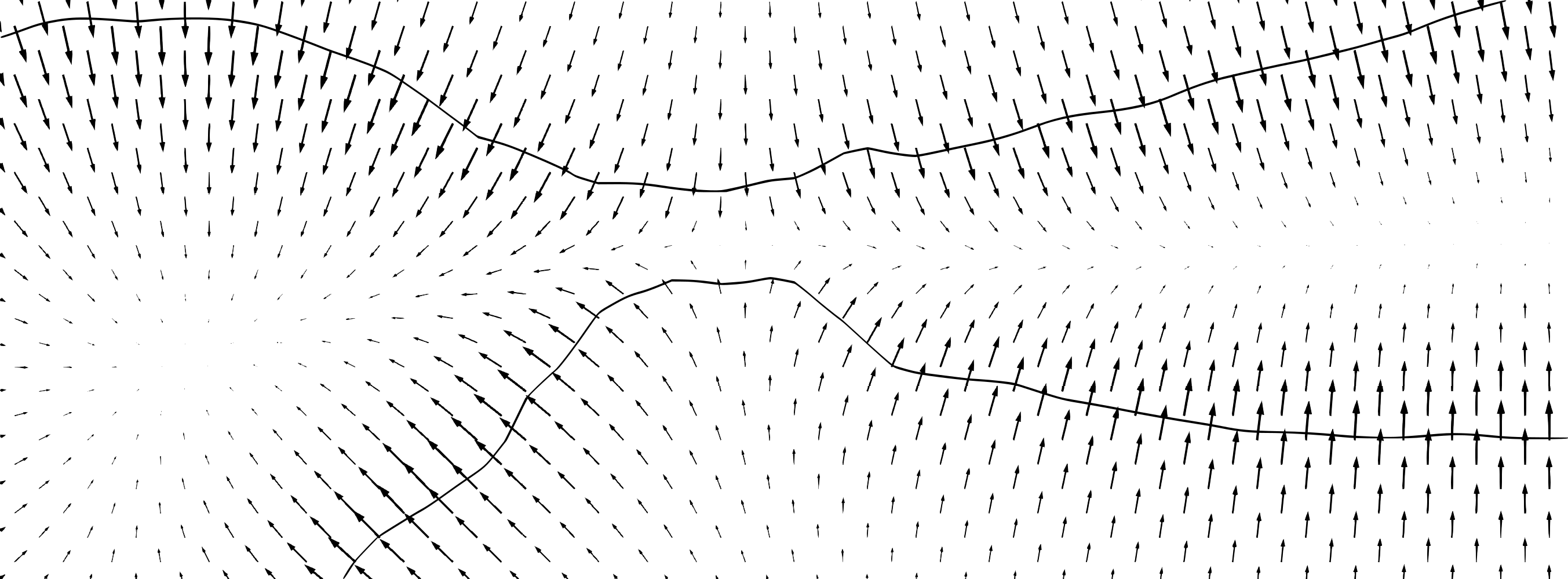}\end{center}
\caption{Zoom into the junction of the two shapes in Figure \ref{fig:2dcombined_levelset} with a visualization of the shape gradient and its impact to the level-set function.}
\label{fig:2dinitial_glyph}
\end{figure}
As pointed out in Section \ref{sec:algorithm} the advantage of a volume formulation of shape derivative $dJ(u,w,\Omega)[v]$ and gradient $\psi$ is, that $\partial \Oint$ does not have to be constructed geometrically at any time in the algorithm.
Only for the integration of the piecewise constant source term $f$, which arises in the state equation \eqref{eq:state} and the shape derivative \eqref{eq:shape_derivative}, the actual shape $\Oint$ has to be known.
The quadrature is performed according to Algorithm \ref{alg2}.
\begin{algorithm}
\caption{Choosing the quadrature order}
\label{alg2}
\begin{algorithmic}[1]
\itemsep0.05cm
\REQUIRE Let $\lbrace f_1, \dots, f_n\rbrace$ be the nodal values within one finite element $e_j$
\IF{$\min\lbrace f_1, \dots, f_n\rbrace \cdot \max\lbrace f_1, \dots, f_n\rbrace < 0$}
	\STATE Choose Gauss-Legendre nodes of order $q=2$
\ELSE
	\STATE Choose Gauss-Legendre nodes of order $q=8$
\ENDIF
\end{algorithmic}
\end{algorithm}
The idea is to detect cells where the level-set function $\phi$ passes zero values and increase the order of the underlying quadrature rule.
For our particular choice of $V_h$ these are cells where the nodal values in one element do not have the same sign.
Note that this procedure works for piecewise linear, bilinear or trilinear basis functions in $V_h$ only.
This quadrature is not exact, since the integrand is not a polynomial in the cells, where the level-set function passes zero.
Yet, this saves us from reconstructing a geometric representation of the set $\lbrace x \in \Omega : \phi(x) = 0 \rbrace$.
A more accurate but computationally more expensive strategy would for instance be to use techniques like extended finite elements (XFEM) \cite{dolbow1999finite}.

Figure \ref{fig:2dinitial_levelset} and \ref{fig:2dlevelset_function} show the transport of the level-set.
\begin{figure}
\begin{center}\includegraphics[width=0.9\textwidth]{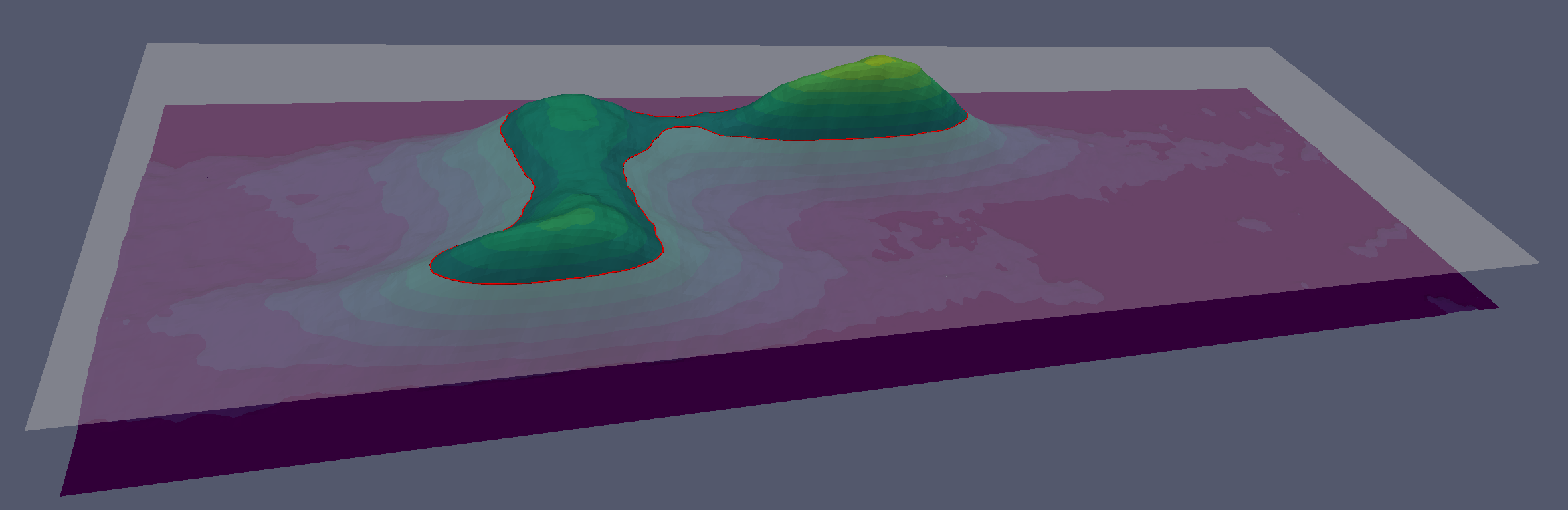}\end{center}
\caption{Initial level-set after relaxation problem and before shape optimization.}
\label{fig:2dinitial_levelset}
\end{figure}
\begin{figure}
\begin{center}\includegraphics[width=0.9\textwidth]{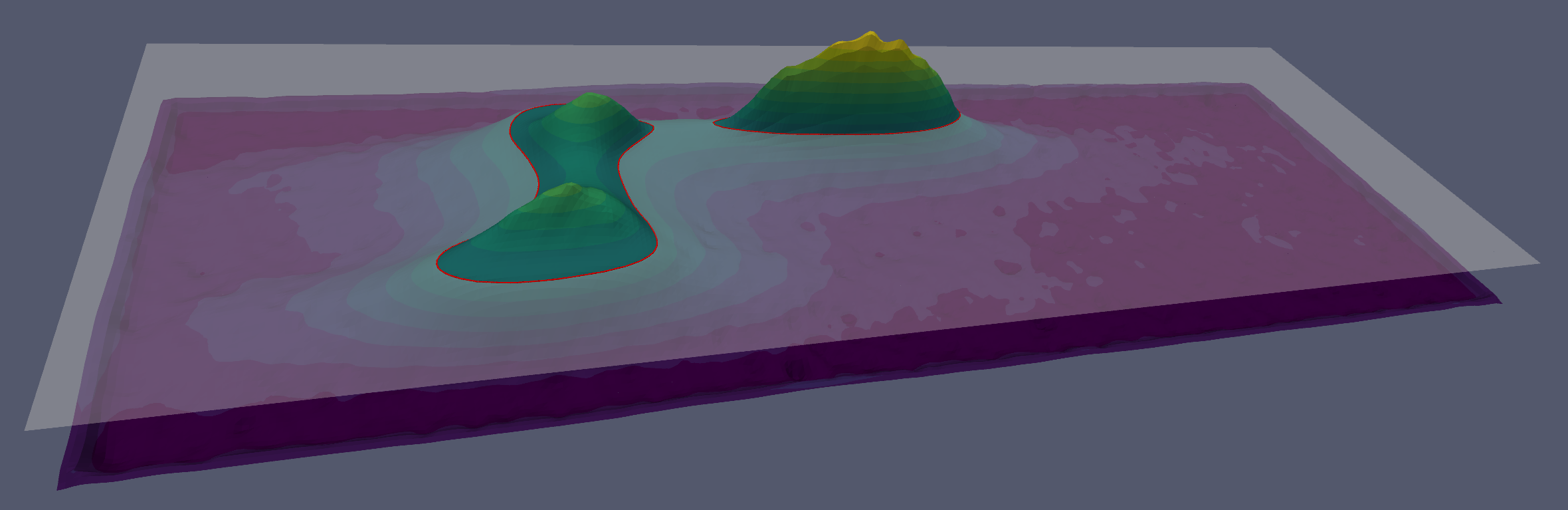}\end{center}
\caption{Final level-set function after shape optimization.}
\label{fig:2dlevelset_function}
\end{figure}
In the first figure $\phi^0$ is visualized as the height of the surface.
The transparent plane represents the zero level and the red line, which is the intersection of this plane and the level-set function, is interpreted as $\partial \Oint$.
Figure \ref{fig:2dlevelset_function} shows the same final level-set after the shape optimization part of Algorithm \ref{alg1}.

For the representation of the gradient $\psi$ we choose $\alpha = 1\mathrm{e}{-2}$.
The transport of the level-set function $\phi$ given by the model \eqref{eq:time_dependent_adv_diff} is discretized using a backward Euler time stepping with a step-length of $\Delta t = 1$.
With each gradient $\phi$ one time step is computed.
The values of the gradient $\phi$ are decreasing during the optimization, which makes it necessary to link the diffusion coefficient $\epsilon$ to $\phi$.
Otherwise, the transport of the level-set would turn into diffusion dominated model, which is not intended.
Let $\phi_\mathrm{max}$ be the maximum of the absolute values of the discretized gradient $\phi$.
We then choose $\epsilon = \phi_\mathrm{max} \cdot 2\mathrm{e}{-3}$.

The objective function during the second phase of Algorithm \ref{alg1} can be seen in Figure \ref{fig:2dobjective}.
\begin{figure}
\begin{center}\includegraphics[width=0.9\textwidth]{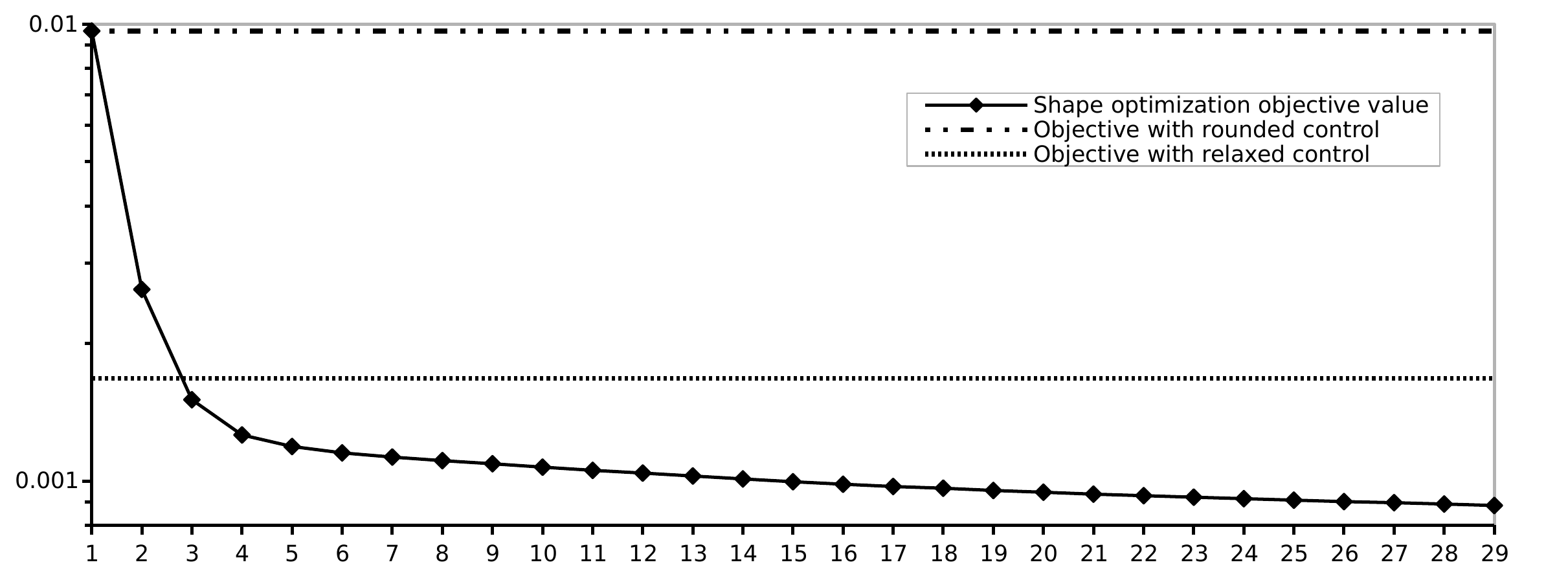}\end{center}
\caption{Objective value during shape optimization and level of objective function with respect to relaxed control (dashed line, log-scale).}
\label{fig:2dobjective}
\end{figure}
Here we can see the difference between the model and the measurements $\Vert u - \bar{u}\Vert_{L^2(\Omega)}$ for the relaxed control (dashed line) and after rounding the relaxed control with respect to the mean average (dotted line).
We observe, that the relaxed control leads to a better objective value than the rounded one.
Yet, by moving the level set slightly in the direction of the shape gradient we can even outperform the relaxed solution.
The algorithm stops according to the criterion in line 12 for $\epsilon_2 = 1\mathrm{e}{-4}$.

\begin{figure}
\begin{center}
\includegraphics[width=0.9\textwidth]{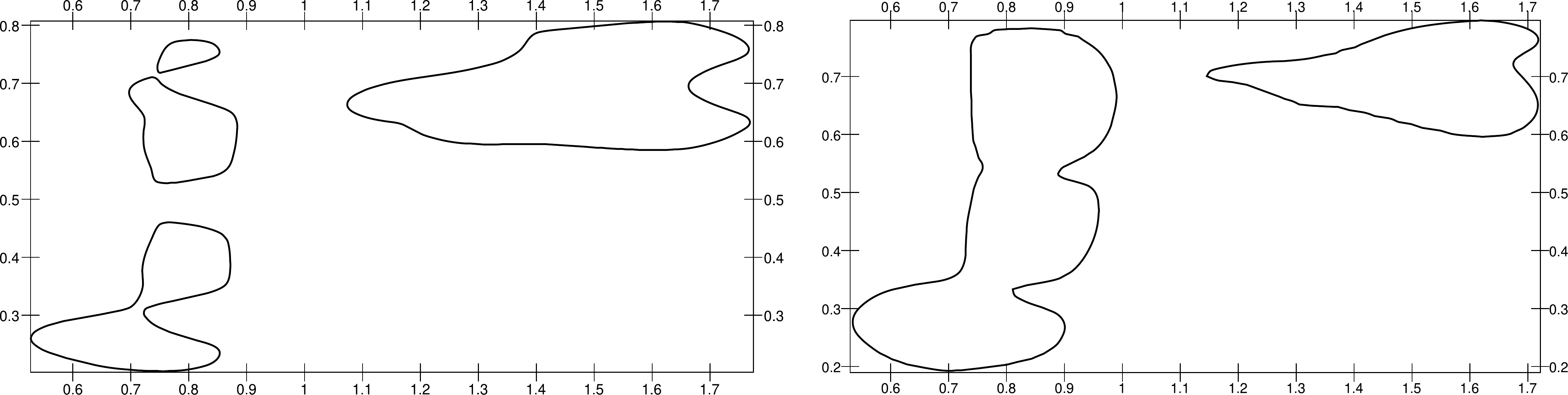}
\end{center}
\caption{Inital guess (left) and final solution after 60 shape optimization iterations (right) with partial measurements}
\label{fig:partial_measurements}
\end{figure}
We now run the same test as before, yet with partial measurements only.
The setting is similar to the one in Figure \ref{fig:domain}.
We have a grid of $6\times 6$ sensors, which are equidistantly distributed, such that $\Om$ covers $26.4\%$ of $\Omega$.
The purpose of this test is, that the relaxation problem overestimates the number of shapes as depicted in Figure \ref{fig:partial_measurements} on the left.
Here it can be seen on the right, that after the stopping criterion is reached, three shapes are united into one.
These results are achieved by increasing the diffusion in the level-set transport equation to $\epsilon = \phi_\mathrm{max} \cdot 5\mathrm{e}{-3}$.

Our second example is a three dimensional case similar to the two dimensional one.
Here we want to point out the computational cost for the relaxation problem.
Since we only want a rough estimation of the number and position of the shapes $\Oint$, we solve this problem on a coarser level.
As pointed out earlier we solve the state \eqref{eq:state} and adjoint equation \eqref{eq:adjoint} with an iterative solver.
Here we choose a GMRES iteration preconditioned with geometric multigrid, which is based on a hierarchical grid structure.
The standard geometric multigrid preconditioner of the PETSc toolbox is used with one V-cycle per GMRES iteration.
The preconditioner is configured with two SOR pre-smoothing and two post-smoothing steps, respectively.
We thus have the geometric structure and interpolation operators of several finite element spaces $V_{h_1} \subset \dots \subset V_{h_l}$ available.
The spaces $V_{h_i}$ are again based on linear finite elements.
This is utilized in line 10 of Algorithm \ref{alg1}, where the initial guess obtained by the relaxation solver can optionally be interpolated to a fine grid for the shape optimization.

\begin{figure}
\begin{minipage}{0.49\textwidth}
\begin{center}\includegraphics[width=1.0\textwidth]{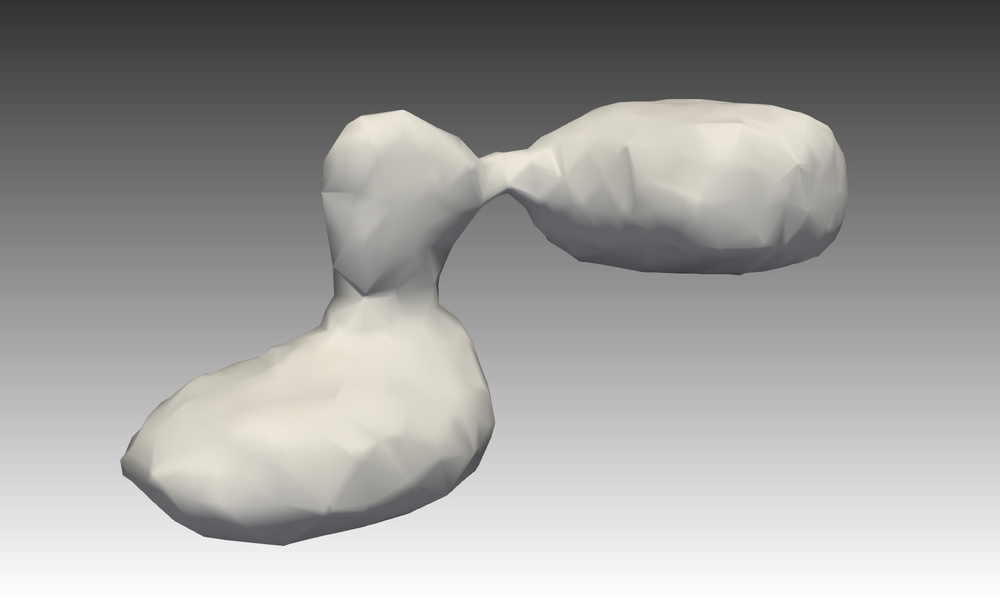}\end{center}
\begin{center}\includegraphics[width=1.0\textwidth]{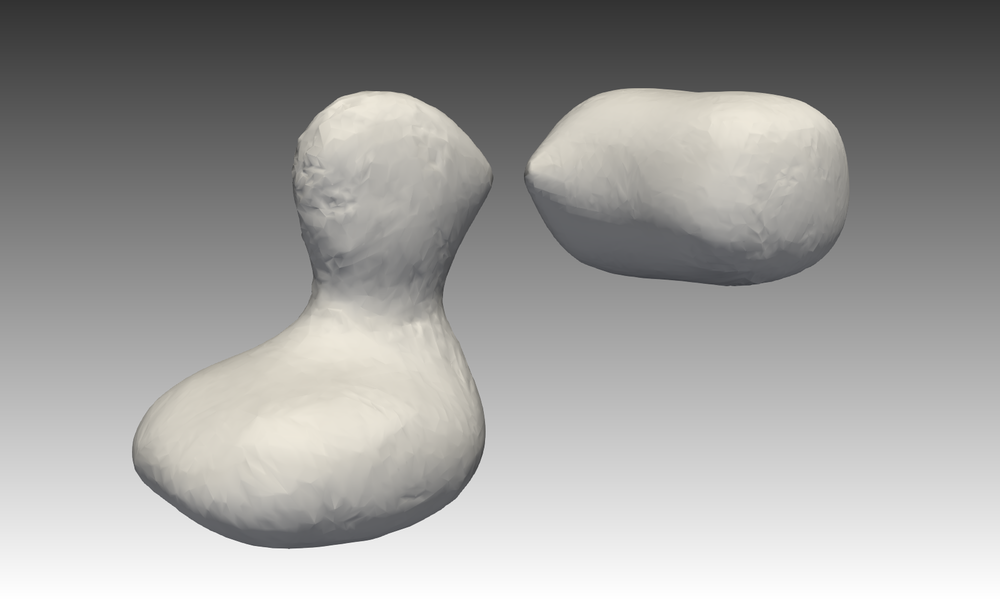}\end{center}
\end{minipage}
\begin{minipage}{0.49\textwidth}
\begin{center}\includegraphics[width=1.0\textwidth]{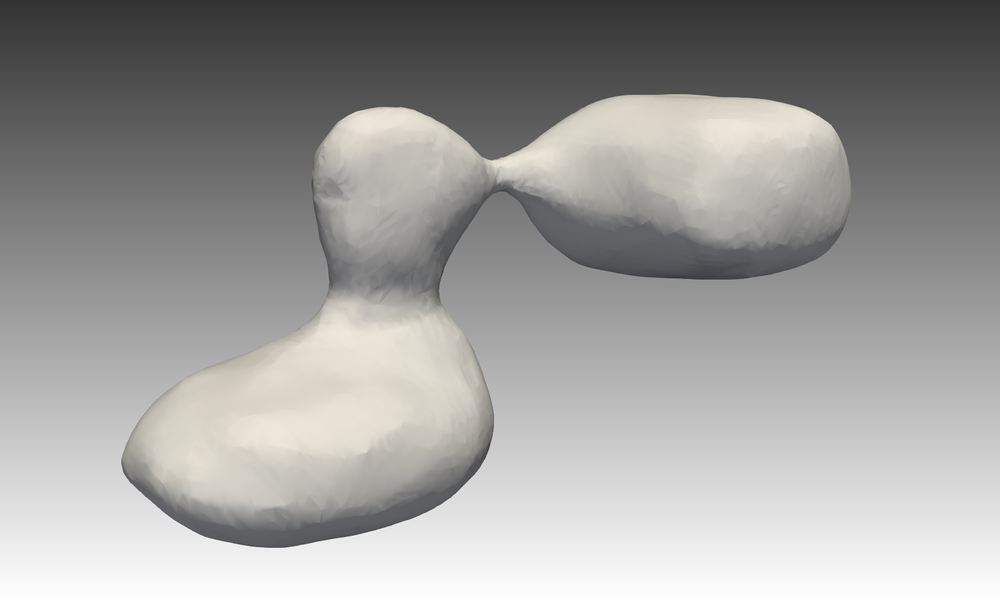}\end{center}
\begin{center}\includegraphics[width=1.0\textwidth]{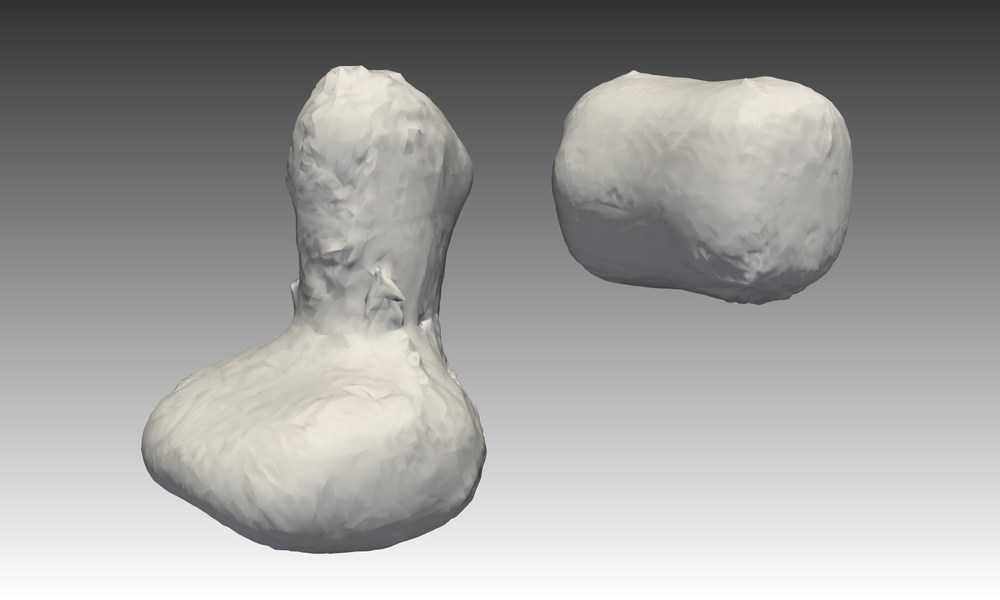}\end{center}
\end{minipage}
\caption{Initial configuration after relaxation problem on coarse grid (top left) and level-set after 10, 50 and 100 (bottom right) shape optimization steps.}
\label{fig:3dresults}
\end{figure}
The computational domain for the three dimensional test is a cuboid given by $ \Omega = \left[ 0, 2 \right] \times \left[ 0, 1 \right] \times \left[ 0, 1 \right]$.
Further, the source term, which is used to generate the measurements, is chosen to be $\Oint = \left[ 0.3, 0.7 \right] \times \left[ 0.4, 0.8 \right] \times \left[ 0.2, 0.4 \right] \,\cup\, \left[ 0.5, 0.7 \right] \times \left[ 0.5, 0.7 \right] \times \left[ 0.2, 0.8 \right] \,\cup\, \left[ 0.9, 1.3 \right] \times \left[ 0.2, 0.6 \right] \times \left[ 0.5, 0.8 \right]$.
The velocity field is given by the constant $\velocity = (1,\, 0,\, 0)^T$.
We have again additive noise on the measurements, which is distributed according to $\mathcal{N}(0,\, u_\mathrm{max}\cdot 0.05)$.
These measurements are generated on fine grid and the multigrid restriction is used to obtain a representation on the coarse grid for the relaxation problem.
$\Omega$ is discretized by $10,350$ tetrahedrons on the coarse grid using the Delaunay algorithm in the mesh generator GMSH \cite{geuzaine2009gmsh}.
We then obtain by three hierarchical refinements $5,299,200$ elements on the finest grid level.
The second grid $V_H \coloneqq V_{h_2}$ is used for the relaxation problem and the finest grid $V_H \coloneqq V_{h_4}$ for the shape optimization.

Figure \ref{fig:3dresults} shows the $\partial \Oint$ after the relaxation problem and after 10, 50, 100 iterations of the shape optimization.
In the first picture it can be seen how the intial guess is influenced by edges in the coarse grid.
This is smoothed out in the following shape optimization iterations on the much finer grid.
The parameter are chosen similar to the two dimensional case as $\mu = 5\mathrm{e}{-2}$, $\gamma = 2\mathrm{e}{+1}$, $\alpha = 1\mathrm{e}{-1}$, $\epsilon_1 = 1\mathrm{e}{-12}$ and $\epsilon_2 = 1\mathrm{e}{-4}$.
The propagation of the level-set function is controlled by $\epsilon = \phi_\mathrm{max} \cdot 6\mathrm{e}{-3}$, $\Delta t$ and one time step per optimization iteration.
As a result we can see that again it is possible to split the shape into two.

\section{Conclusion}
\label{sec:conclusion}
This paper presents a method towards mixed integer PDE constrained optimization by circumventing the computational complexity of combinatorial optimization approaches.
The original PDE constrained problem with binary variables, which represent a control that is either active or inactive, is replaced by a relaxed problem leading to an approximate solution.
Yet, this approximate solution is a control with continuous and not discrete values.
It turns out that a rounding strategy based on the average value of the relaxed control may lead to poor results.
Using this as an initial guess we improve the control by interpreting it as a level-set function and moving it slightly in the direction of a shape gradient.
From a computational point of view this method is attractive since it can be formulated for structured grids.
The important fact is, that the interface, which separates active and inactive control, does not have to be constructed during the algorithm.

We also demonstrate that the relaxation problem can be computed on a much finer grid than the shape problem leading to a computationally cheap algorithm with high spatial resolution.
Moreover, we are able to bypass some major challenges of shape optimization methods.
Since the shape part of our algorithm starts with a good initial guess, large deformations are not necessary.
We also demonstrate that we are not bound to the topology of the initial guess.
The domain formulation of shape derivatives and gradients together with a small amount of diffusion on the level-set transport equation enables us to join and split shapes.
\section*{Acknowledgment}
This work has been supported by the German Research Foundation (DFG) within the priority program SPP 1648 ``Software for Exascale Computing'' under contract number Schu804/12-1 and the research training group 2126 ``Algorithmic Optimization''.

\bibliographystyle{plain}
\bibliography{citations.bib}

\end{document}